\title[Distinguishing 2-knots admitting circle actions]{Distinguishing 2-knots admitting circle actions by fundamental groups}
\author[Fukuda]{Mizuki Fukuda}
\address{Mathematics for Advanced Materials Open Innovation Laboratory, AIST, c/o AIMR, Tohoku University, 2-1-1, Katahira, Aoba-ku, Sendai, Miyagi 980-8577, Japan}
\email{mizuki.fukuda.d2@tohoku.ac.jp}
\author[Ishikawa]{Masaharu Ishikawa}
\address{Department of Mathematics, Hiyoshi Campus, Keio University, 
4-1-1 Hiyoshi, Kohoku, Yokohama 223-8521, Japan}
\email{ishikawa@keio.jp}
\theoremstyle{plain}
\newtheorem*{theorem*}{Theorem}
\newtheorem*{lemma*} {Lemma}
\newtheorem*{corollary*} {Corollary}
\newtheorem*{proposition*}{Proposition}
\newtheorem*{conjecture*}{Conjecture}
\newtheorem{theorem}{Theorem}[section]
\newtheorem{lemma}[theorem]{Lemma}
\newtheorem{proposition}[theorem]{Proposition}
\theoremstyle{remark}
\newtheorem{remark}[theorem]{Remark}
\newtheorem*{example*}{Example}
\theoremstyle{definition}
\newtheoremstyle{citing}
  {}
  {}
  {\itshape}
  {}
  {\bfseries}
  {.}
  {.5em}
  {\thmnote{#3}}
\theoremstyle{citing}
\newcommand{\Integer}{\mathbb{Z}}
\begin{document}

\begin{abstract}
A 2-sphere embedded in the 4-sphere invariant under a circle action is called a branched twist spin. 
A branched twist spin is constructed from a 1-knot in the 3-sphere and a pair of coprime integers uniquely. In this paper, we study, for each pair of coprime integers, if two different 1-knots yield the same branched twist spin, and prove that such a pair of 1-knots does not exist in most cases. Fundamental groups of $3$-orbifolds of cyclic type are obtained as quotient groups of the fundamental groups of the complements of branched twist spins. We use these groups for distinguishing branched twist spins.
\end{abstract}


\maketitle

\section{Introduction}

Circle actions on homotopy $4$-spheres are classified by Montgomery and Yang~\cite{MY60} and Fintushel~\cite{Fin76}. 
Later, Pao proved in~\cite{Pa78} that such homotopy $4$-spheres are the standard $4$-sphere $S^4$. 
A $2$-sphere embedded in $S^4$ that is invariant under a circle action is called a {\it branched twist spin} (cf.~\cite[\S 16.3]{Hil02}). Note that a circle embedded in $S^3$ that is invariant under a circle action is called a torus knot. A twist spun knot is an example of a branched twist spin. See Section~2.1 for more precise explanation.
In this sense, branched twist spins are the simplest class of $2$-knots in $S^4$ if one wants to understand $2$-knots geometrically.
The orbit space of a circle action on $S^4$ is the $3$-sphere $S^3$ except a trivial case and 
the image of the union of singular orbits by the orbit map constitutes an embedded circle, that is a $1$-knot, in $S^3$.
Conversely, we may recover a branched twist spin from a $1$-knot in $S^3$ with a pair of coprime integers $m$ and $n$ with $n>0$ uniquely. 
For a given knot $K$ in $S^3$ and these integers $m$ and $n$, we denote the corresponding branched twist spin by $K^{m,n}$.

In this paper, we distinguish two knots up to orientation of the ambient spaces.
Two knots $K_1$ and $K_2$ in $S^3$ are said to be {\it equivalent} if there exists a (not necessary orientation-preserving) diffeomorphism $h:S^3\to S^3$ with $h(K_1)=K_2$.
Two $2$-knots $\hat K_1$ and $\hat K_2$ are said to be  {\it equivalent} if there exists a (not necessary orientation-preserving) diffeomorphism $h:S^4\to S^4$ with $h(\hat K_1)=\hat K_2$.

The aim of this paper is to understand if we can obtain the same branched twist spin from different $1$-knots in $S^3$.
This problem is studied by the first author in~\cite{F17, F17a, F22}
using knot determinants, irreducible $SL(2,\mathbb C)$-metabelian representations, and $SL(2,\mathbb Z_3)$-representations of 
$\pi_1(S^4\setminus K^{m,n})$, respectively.
Especially, it is proved in~\cite{F22} that, for two branched twist spins $K_1^{m_1,n_1}$ and $K_2^{m_2,n_2}$
constructed from two $1$-knots $K_1$ and $K_2$, 
if $m_1\ne m_2$ then $K_1^{m_1,n_1}$ and $K_2^{m_2,n_2}$ are not equivalent.
Note that $K^{m,n}$ and $K^{m,n+2m}$ are equivalent since $K^{m,n+m}$ is obtained from $K^{m,n}$ by the Gluck surgery along $K^{m,n}$, see the explanation in the proof of~\cite[Corollary 3.3]{F18}.
By reversing the orientation of $S^4$ if necessary and applying the isomorphism between $K^{m,n}$ and $K^{m,n+2m}$, we can assume that $(m,n)$ is either a pair of coprime positive integers with $n<m$,  $(0,1)$ or $(1,1)$.

Our main result is the following.

\begin{theorem}\label{thm01}
Let $m\geq 3$, and let $K_1$ and $K_2$ be prime and non-torus knots.
Then, $K_1^{m,n}$ and $K_2^{m,n}$ are equivalent if and only if $K_1$ and $K_2$ are.
\end{theorem}

For composite knots and torus knots, see Remark~\ref{remark1}.
If $(m,n)=(0,1)$ then $K^{m,n}$ is a spun knot and $\pi_1(S^4\setminus K^{m,n})$ is isomorphic to $\pi_1(S^3\setminus K)$. Therefore, by the positive answer to the complement conjecture~\cite{GL89},
if $K_1$ and $K_2$ are prime and
not equivalent then $K_1^{0,1}$ and $K_2^{0,1}$ are also not equivalent.
If $(m,n)=(1,1)$ then $K^{m,n}$ is always the trivial $2$-knot in $S^4$~\cite{Zee65}. 

In the case $m=2$, that is $(m,n)=(2,1)$, the following result holds.

\begin{theorem}\label{thm02}
Let $m=2$, and let $K_1$ and $K_2$ be prime knots.
Assume that $K_i$ is either a satellite knot or Conway reducible for $i=1,2$.
Then, $K_1^{m,n}$ and $K_2^{m,n}$ are equivalent if and only if $K_1$ and $K_2$ are.
\end{theorem}

Recall that a $1$-knot is said to be {\it Conway reducible} if it admits an essential Conway sphere, that is, an essential four-punctured sphere in its complement. 
For knots without essential tori and essential Conway spheres, see Remark~\ref{remark2}.

To prove the above theorems, we use known results about $3$-orbifolds of cyclic type
in~\cite{Tak91, BP01}.
The point is that the quotient group of $\pi_1(S^4\setminus K^{m,n})$ by its center can be regarded as
the fundamental group of a $3$-orbifold of cyclic type 
in most cases (Lemma~\ref{lemma41}).
We also use the fact that the fiber of $K^{m,n}$ is obtained as the $m$-fold cyclic branched cover of $S^3$ along $K$ with removing an open $3$-ball~\cite{Zee65, Pa78} and its fundamental group is the commutator subgroup
of  $\pi_1(S^4\setminus K^{m,n})$.

In Section~2, we give brief explanations and introduce known results about branched twist spins and $3$-orbifolds needed to prove our theorems.
Proofs are given in Section~3.

The authors would like to express their sincere gratitude to Makoto Sakuma for pointing out a gap in the proof of Lemma 3.1 and telling us how to fill it.
They would also like to thank Jonathan A. Hillman, Yuya Koda, Kokoro Tanaka and Yuta Taniguchi for their helpful suggestions and precious comments.
The second author is supported by 
JSPS KAKENHI Grant Numbers JP19K03499, JP23K03098 and JP23H00081, JSPS-VAST Joint Research Program, Grant number JPJSBP120219602, and Keio University Academic Development Funds for Individual Research.

\section{Preliminaries}

\subsection{Branched twist spins}

Circle actions on homotopy $4$-spheres are classified by Montgomery and Yang~\cite{MY60} and Fintushel~\cite{Fin76}, and Pao proved in~\cite{Pa78} that such homotopy $4$-spheres are the standard $4$-sphere $S^4$. 
See for instance~\cite{Bre72} for basic terminologies of circle actions.


\begin{theorem}[\cite{Pa78} Theorem 4]
The weak equivalence classes of effectively locally smoothly circle actions on $S^4$ are classified into
 four cases: {\rm (1)} $\{D^3\}$, {\rm (2)} $\{S^3\}$, {\rm (3)} $\{S^3, m\}$, and {\rm (4)}~$\{(S^3,K), m,n\}$.
\end{theorem}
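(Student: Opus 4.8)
The plan is to reconstruct the action from its \emph{weighted orbit space} and then to enumerate the possibilities forced by the assumption that the total space is a homotopy $4$-sphere. First I would apply the slice theorem to describe the action near each orbit: a principal orbit has an invariant tube $S^1\times D^3$; an exceptional orbit with isotropy $\mathbb{Z}_k$ has a tube $S^1\times_{\mathbb{Z}_k}D^3$; an isolated fixed point has a neighbourhood on which $S^1$ acts linearly on $\mathbb{R}^4=\mathbb{C}^2$ by $(z_1,z_2)\mapsto(e^{ia\theta}z_1,e^{ib\theta}z_2)$; and a fixed surface has a normal $2$-disk rotated by $S^1$. These models show that the fixed-point set is a disjoint union of isolated points and closed surfaces and that the exceptional orbits sweep out $1$-dimensional images in the orbit space. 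The classification is therefore governed entirely by the orbit space $M^{*}=S^4/S^1$ together with the labelling of these singular images by their isotropy groups.

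The next step is to pin down $M^{*}$. Because the orbits are connected, the orbit map induces a surjection $\pi_1(S^4)\twoheadrightarrow\pi_1(M^{*})$, so $M^{*}$ is a compact simply connected $3$-manifold whose boundary is exactly the image of the fixed surfaces. Simple connectivity and the half-lives-half-dies principle force $\partial M^{*}$ to be a union of $2$-spheres, so every fixed surface is an $S^2$. Capping off then shows that $M^{*}$ is an $S^3$ with finitely many open balls removed, one for each fixed sphere (using that a closed simply connected $3$-manifold is $S^3$; in Pao's pre-Poincar\'e argument this identification is instead made by explicit reconstruction). Finally $\chi(S^4)=\chi(\mathrm{Fix})=2$ forces either a single fixed $S^2$, giving $M^{*}=D^3$ (case~(1), $\{D^3\}$, the rotation of $S^4$ about a fixed $S^2$), or two isolated fixed points and $M^{*}=S^3$ (cases (2)--(4)).

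It remains to analyse the $S^3$ case, where the two fixed points $p_{\pm}$ carry coprime weights $(a_{\pm},b_{\pm})$ (coprimality being forced by smoothness of the slice representation), and arcs of exceptional orbits of isotropy $\mathbb{Z}_{a_{\pm}}$ and $\mathbb{Z}_{b_{\pm}}$ emanate from them. Tracking these arcs, an endpoint of an exceptional arc can only be a fixed point, so the arcs must join $p_{-}$ to $p_{+}$; the two of them close up into a single embedded circle $K\subset S^3$ carrying the two weights, which I would record as $m$ and $n$ with $\gcd(m,n)=1$. When both weights are trivial there are no exceptional orbits and one recovers the suspended Hopf action (case~(2), $\{S^3\}$); when exactly one weight is nontrivial the circle is unknotted and the datum is a single integer (case~(3), $\{S^3,m\}$); and in general one obtains a possibly knotted $K$ with a coprime pair, giving case~(4), $\{(S^3,K),m,n\}$. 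Reconstructing the total space from each weighted orbit space and checking that it is the standard $S^4$ completes the argument.

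The hardest part is this last paragraph: showing that the labelled singular set in $S^3$ can take no other shape. One must rule out stray exceptional circles and more complicated branchings, verify that the two arcs assemble into exactly one knot $K$, and check that the coprimality of $(m,n)$ is precisely the condition under which the slice data glue to a smooth manifold and indeed to a homotopy $4$-sphere. The delicate local-to-global gluing of the tube models, together with the Seifert-fibered structure forced on the neighbourhood of $K$, is where the real content lies; identifying $M^{*}$ unconditionally as $S^3$ rather than a homotopy sphere is a secondary subtlety that Pao handles by the explicit reconstruction rather than by invoking the Poincar\'e conjecture.
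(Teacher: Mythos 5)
This statement is quoted verbatim from Pao (Theorem 4 of \cite{Pa78}); the paper under review gives no proof of it, so there is no internal argument to compare yours against. Judged on its own terms, your sketch follows the standard Montgomery--Yang/Fintushel/Pao line: use the slice theorem to get local models, pass to the weighted orbit space, use $\chi(S^4)=\chi(\mathrm{Fix})=2$ to split into the fixed-$S^2$ case (orbit space $D^3$) and the two-fixed-point case (orbit space $S^3$), and then read off the singular data. That skeleton is correct, but the steps you defer to your final paragraph are not loose ends --- they are the content of the theorem.

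Concretely, three things are missing. First, the statement classifies \emph{weak equivalence classes}, so you must prove the weighted orbit space is a complete invariant, i.e.\ that two actions with identical labelled orbit data are weakly equivalent. This is Fintushel's equivariant classification and rests on constructing a cross-section to the orbit map over the complement of the singular set; nothing in your outline addresses this uniqueness direction. Second, you assert but do not prove that the exceptional set consists of exactly two arcs joining the two fixed points: a priori there could be additional closed circles of exceptional orbits, or arcs with mismatched isotropy labels at their endpoints, and excluding these requires the homological computation relating the orbit data to $H_*(S^4)$ --- this is where the hypothesis that the total space is a homotopy sphere enters beyond the Euler characteristic count. Third, the realization statement --- that every admissible datum arises from an action on the standard $S^4$ rather than merely on some homotopy $4$-sphere --- is Pao's actual contribution (via his decomposition and the Gluck-twist analysis) and is dismissed in one sentence. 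As a reading guide to the literature your proposal is serviceable; as a proof it leaves the hard parts unproved.
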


The notation $\{(S^3,K), m,n\}$ means the circle action satisfies the following properties:
\begin{itemize}
\item The orbit space is $S^3$.
\item The image of the union of singular orbits by the orbit map constitutes a 1-knot $K$ in the orbit space $S^3$.
\item $m$ and $n$ are coprime integers more than $1$ and the types of exceptional orbits are $\mathbb{Z}/m\mathbb{Z}$ and  $\mathbb{Z}/n\mathbb{Z}$.
\end{itemize}
Here, an exceptional orbit is said to be of $\mathbb{Z}/m\mathbb{Z}$-type 
if its isotropy group is isomorphic to $\mathbb{Z}/m\mathbb{Z}$.
Let $E_m$ and $E_n$ be the set of exceptional orbits of $\mathbb{Z}/m\mathbb{Z}$-type 
and $\mathbb{Z}/n\mathbb{Z}$-type, respectively, and $F$ be the fixed point set.
It is known that the union $E_n \cup F$ is diffeomorphic to $S^2$ in $S^4$, which is called the $(m,n)$-{\it branched twist spin} of $K$ and denoted as $K^{m,n}$.
Let $E_m^*$, $E_n^*$ and $F^*$ be the images of $E_m$, $E_n$ and $F$ by the orbit map, respectively.
The images $E_m^*$ and $E_n^*$ are arcs connecting the two points $F^*$,
and the union $E_m^*\cup E_n^*\cup F^*$ is the $1$-knot $K$.

If $n=1$ then $E_n$ becomes a union of free orbits. This is the case $\{S^3, m\}$.
Choose an arc $E_n^*$ in the orbit space $S^3$ connecting the two point $F^*$ so that 
the union $E_m^*\cup E_n^*\cup F^*$ constitutes a $1$-knot $K$. Then  the union $E_n \cup F$ is again diffeomorphic to $S^2$ in $S^4$, which is the so-called {\it $m$-twist spun knot} of $K$~\cite{Zee65}.
We call this the $(m,1)$-{\it branched twist spin} of $K$ and denote it as $K^{m,1}$.

If $(m,n)=(1,1)$ then $E_m\cup E_n$ becomes a union of free orbits,
which is the case $\{S^3\}$. Choose arcs $E_m^*$ and $E_n^*$ in the orbit space $S^3$ connecting the two point $F^*$ so that $E_m^*\cup E_n^*\cup F^*$ constitutes a $1$-knot $K$.
The union $E_n \cup F$ is a $1$-twist spun knot of $K$ and it is a trivial $2$-knot in $S^4$ as mentioned in~\cite{Zee65}.
We call this the $(1,1)$-{\it branched twist spin} of $K$ and denote it as $K^{1,1}$.

The notation $\{D^3\}$ is the circle action whose fixed point set is the trivial $2$-knot $\hat K$ in $S^4$.
The circle action yields the trivial fiberation in $S^4\setminus \hat K$ over $S^1$.
The orbit space is the $3$-ball $D^3$. 
Choose a properly embedded arc $k$ in $D^3$. Embed $D^3$ into $S^3$ and shrink $S^3\setminus \text{Int} D^3$ to a point. Then the embedded arc $k$ becomes a $1$-knot $K$ in $S^3$.
The preimage of $k$ by the orbit map is a $2$-knot in $S^4$,
which is the so-called {\it spun knot} of $K$~\cite{Art25}.
We call this the $(0,1)$-{\it branched twist spin} of $K$ and denote it as $K^{0,1}$.

In~\cite{Pa78}, Pao constructed the complement of $K^{m,n}$ by setting the $m$-fold cyclic branched cover of $S^3$ along $K$ with removing an open ball to be a fiber and the monodromy to be periodic, shifting the sheets of the cyclic branched cover by $n$, and then proved that $K^{m,n}$ is a $2$-knot in $S^4$.
His results are summarized as follows:

\begin{theorem}[Pao \cite{Pa78}]\label{thmpao}
If $m\geq 1$, then $K^{m,n}$ is a fibered 2-knot in $S^4$ whose fiber is diffeomorphic to the $m$-fold cyclic branched cover of $S^3$ along $K$ with removing an open ball and whose monodromy is periodic, shifting the sheets of the cyclic branched cover by $n$.
\end{theorem}

Next, we introduce a presentation of the fundamental group of $K^{m,n}$ given in~\cite{Plo84} (see also~\cite{F17}).
Let $\langle x_1, \ldots, x_l \mid r_1, \ldots, r_l\rangle$ be a Wirtinger presentation of a $1$-knot $K$ in $S^3$.
The union of free orbits in $S^4\setminus K^{m,n}$ is a dense subset of $S^4\setminus K^{m,n}$.
Let $h$ denote the element of $\pi_1(S^4\setminus K^{m,n})$ corresponding to a free orbit.
Then $\pi_1(S^4 \setminus K^{m,n})$ is presented as
\begin{equation}\label{knotgroup}
\pi_1(S^4 \setminus K^{m,n}) \cong \langle x_1, \ldots, x_l, h \mid r_1, \ldots, r_l,  x_1hx^{-1}_1h^{-1}, \ldots, x_lhx^{-1}_lh^{-1}, x_1^mh^{\beta}\rangle,
\end{equation}
where $\beta$ is an integer satisfying $n\beta \equiv 1\ ({\rm mod}\ m)$.

It will be proved in Lemma~\ref{lemma41} that if $K$ is neither a trivial knot nor a torus knot 
and the $m$-fold cyclic branched cover of $S^3$ along $K$ is aspherical, then 
the center of $\pi_1(S^4\setminus K^{m.n})$ is $\langle h\rangle$.
In this case, the quotient group of the group \eqref{knotgroup} by its center 
is 
\begin{equation*}
\pi_1(S^4\setminus K^{m,n})/\langle h\rangle \cong 
\langle x_1, \ldots, x_l \mid r_1, \ldots, r_l, x_1^m \rangle.
\end{equation*}
This group is regarded as the fundamental group of the $3$-orbifold of cyclic type with underlying space $S^3$ and ramification locus $K$ of order $m$.

\subsection{$3$-orbifolds}

An orbifold is a connected and separable metric space locally homeomorphic to the quotient space of $\mathbb R^n$ by a finite group action. 
This notion is introduced by Satake~\cite{Sat56} and renamed by W.~Thurston~\cite[Chapter 13]{Thu77}. 
A $3$-dimensional orbifold ($3$-orbifold for short) whose ramification locus is the disjoint union of simple closed curves is called a $3$-orbifold of {\it cyclic type} in~\cite{DM84, BP01} and called a {\it $3$-branchfold} in~\cite{Tak88}. 
A $3$-orbifold is said to be {\it sufficiently large} (or {\it Haken}) if there exists an orientable and incompressible $2$-suborbifold that is not boundary parallel.
See for instance~\cite{Thu77, Tak91, TY99, TY02, BP01, BMP03, BLP05} for basics of $3$-orbifolds.

The fundamental group $\pi_1^{orb}\mathcal O$ of a $3$-orbifold $\mathcal O$ is defined to be the deck transformation group of the universal covering of $\mathcal O$. 
Let $L$ be a $1$-link in $S^3$, $m\geq 2$, and $\mathcal O(L,m)$ be the $3$-orbifold of cyclic type with underlying space $S^3$ and ramification locus $L$ of order $m$.
Let $M_m(L)$ denote the $3$-manifold obtained as the $m$-fold cyclic branched cover of $S^3$ along $L$.
The covering map $M_m(L)\to \mathcal O(L,m)$ induces a monomorphism 
$\pi_1(M_m(L))\to \pi_1^{orb}(\mathcal O(L,m))$ (cf.~\cite[Proposition 2.5]{BMP03}).


In~\cite{Tak91}, Takeuchi studied when two $3$-orbifolds with the isomorphic fundamental groups are homeomorphic as orbifolds, and obtained the next theorem as one of the consequences of his arguments.

\begin{theorem}[\cite{Tak91} Theorem 8.1]\label{thmTak91}
Let $L_1$ and $L_2$ be prime links in $S^3$ such that $\mathcal O(L_1,r)$ and $\mathcal O(L_2,r)$ are sufficiently large for some $r\geq 2$.
Then, $L_1$ and $L_2$ are equivalent if and only if $\pi_1^{orb}(\mathcal O(L_1,m))$ and $\pi_1^{orb}(\mathcal O(L_2,m))$ are isomorphic for some $m\geq 2$.
\end{theorem}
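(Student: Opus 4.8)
The plan is to prove the two implications separately. The ``only if'' direction is immediate: if $L_1$ and $L_2$ are equivalent, a diffeomorphism $h\colon S^3\to S^3$ with $h(L_1)=L_2$ carries the cyclic orbifold structure of $\mathcal O(L_1,m)$ to that of $\mathcal O(L_2,m)$, so it is an orbifold homeomorphism and induces an isomorphism $\pi_1^{orb}(\mathcal O(L_1,m))\cong\pi_1^{orb}(\mathcal O(L_2,m))$ for every $m\geq 2$.

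For the ``if'' direction, suppose $\phi\colon\pi_1^{orb}(\mathcal O(L_1,m))\to\pi_1^{orb}(\mathcal O(L_2,m))$ is an isomorphism for some fixed $m\geq 2$. First I would observe that primeness forces the underlying space $S^3$ to be irreducible with the singular link essential, while the ``sufficiently large'' hypothesis (which, as noted above, does not depend on $m$) guarantees that the $m$-fold cyclic branched cover $M_m(L_i)$ is Haken for the relevant $m$; together these make each $\mathcal O(L_i,m)$ a Haken orbifold of cyclic type. The next step is to realize $\phi$ geometrically: for Haken $3$-orbifolds one has an analogue of Waldhausen's rigidity theorem, so that any isomorphism of orbifold fundamental groups is induced by an orbifold homeomorphism $f\colon\mathcal O(L_1,m)\to\mathcal O(L_2,m)$.

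The point requiring care is that such an $f$ is forced to respect the stratification by isotropy. Because an orbifold homeomorphism preserves local isotropy groups, $f$ maps the singular locus of $\mathcal O(L_1,m)$ onto that of $\mathcal O(L_2,m)$ and the regular part onto the regular part; algebraically this reflects the fact that the conjugacy classes of elements of order exactly $m$ — the images of meridians — are matched up by $\phi$, and one must check that this is precisely the peripheral data recorded in the group (cf.~the covering and base-point considerations in~\cite{BMP03}). Consequently $f$ restricts to a homeomorphism $\bar f\colon S^3\to S^3$ of underlying spaces with $\bar f(L_1)=L_2$. Since we do not require $\bar f$ to be orientation-preserving, this says exactly that $L_1$ and $L_2$ are equivalent.

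The main obstacle is the rigidity step: establishing that an abstract group isomorphism between the fundamental groups of Haken $3$-orbifolds of cyclic type is induced by a homeomorphism of orbifolds. This is the orbifold counterpart of Waldhausen's theorem and is the substantive content of the argument; it requires both that the orbifolds be aspherical in the orbifold sense (ensured by irreducibility together with the Haken condition) and a careful treatment of the essential turnovers, peripheral tori, and the singular set, so that the geometric realization genuinely respects the branching order $m$. Once this is in place, the reduction to a homeomorphism of pairs $(S^3,L_1)\to(S^3,L_2)$, and hence to equivalence of the links, is formal.
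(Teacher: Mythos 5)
This statement is not proved in the paper at all: it is imported verbatim from Takeuchi \cite{Tak91} (Theorem 8.1), so there is no internal argument to measure your attempt against. With that caveat, your outline is a faithful reconstruction of the strategy of the cited source. The ``only if'' direction is indeed immediate, and the ``if'' direction does proceed exactly as you describe: realize the isomorphism $\pi_1^{orb}(\mathcal O(L_1,m))\to\pi_1^{orb}(\mathcal O(L_2,m))$ by an orbifold homeomorphism via a Waldhausen-type rigidity theorem for sufficiently large $3$-orbifolds, note that an orbifold homeomorphism preserves local isotropy groups and hence carries singular locus to singular locus, and conclude with a homeomorphism of pairs $(S^3,L_1)\to(S^3,L_2)$. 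The one thing to be clear-eyed about is that what you flag as ``the main obstacle'' --- the geometric realization of an abstract group isomorphism --- is not a gap to be patched but the entire content of Takeuchi's paper, whose title is literally ``Waldhausen's classification theorem for finitely uniformizable $3$-orbifolds''; deferring it wholesale means your proposal is a correct plan rather than a proof. If you wanted to flesh it out, two steps need genuine work: (i) passing from ``$M_m(L_i)$ is Haken'' to ``$\mathcal O(L_i,m)$ is a sufficiently large orbifold'' requires equivariant essential-surface arguments (this is where the definition of sufficiently large via the branched cover, and its independence of $m$, earn their keep), and (ii) the rigidity theorem for closed orbifolds needs asphericality plus a hierarchy adapted to the singular set, which is where the primeness and cyclic-type hypotheses are actually consumed.
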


Note that if  $\mathcal O(K,r)$ is sufficiently large for some $r\geq 2$ then it is for any $r\geq 2$.
A sufficient condition for a prime link $L$ to satisfy that $\mathcal O(L,r)$ is sufficiently large for some $r\geq 2$ is given in~\cite[Proposition~8.2]{Tak91}.




A complete proof of Thurston's orbifold theorem for $3$-orbifolds of cyclic type is given by Boileau and Porti in~\cite{BP01}.
As a corollary of this theorem with the classification of orientable closed Euclidean and spherical $3$-orbifolds (cf.~\cite{BS87, Dun88}), the following result is obtained.

\begin{theorem}[\cite{BP01} Corollary~5]\label{thmBP5}
Let $M$ be a compact, orientable, irreducible $3$-manifold and $L$ be a hyperbolic link in $M$.
Then, for $m\geq 3$, any $m$-fold cyclic branched cover of $M$ along $L$ admits a hyperbolic structure,
except when $m=3$, $M=S^3$ and $L$ is the figure-eight knot.
\end{theorem}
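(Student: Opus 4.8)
The plan is to pass from the branched covering $M_m(L)$ to the orbifold $\mathcal{O}(L,m)$ of cyclic type with underlying space $M$ and singular set $L$ of order $m$, to geometrize this orbifold by Thurston's orbifold theorem, and then to rule out every non-hyperbolic geometry except in the stated case. The first observation is that $M_m(L)$ is exactly the manifold orbifold-covering of $\mathcal{O}(L,m)$ associated to the cyclic quotient $\pi_1^{orb}(\mathcal{O}(L,m))\to\Integer/m\Integer$ sending a meridian to a generator; since the branching index equals the cone order $m$, the local cyclic group $\Integer/m\Integer$ along $L$ injects, so this cover is smooth. Hence it suffices to prove that $\mathcal{O}(L,m)$ is hyperbolic for $m\geq 3$ outside the exceptional case, because a finite orbifold cover of a hyperbolic orbifold by a manifold is a hyperbolic manifold.

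Next I would verify the hypotheses of the orbifold theorem for cyclic type, as established by Boileau and Porti in \cite{BP01}. Since $M$ is irreducible and $L\neq\emptyset$, the orbifold $\mathcal{O}(L,m)$ is irreducible. Because $L$ is hyperbolic, the complement $M\setminus L$ carries a finite-volume hyperbolic metric and is therefore atoroidal and not Seifert fibred; I would use this to deduce that $\mathcal{O}(L,m)$ is atoroidal as an orbifold, an essential toric $2$-suborbifold being incompatible with a hyperbolic complement. The orbifold theorem then gives that $\mathcal{O}(L,m)$ is geometric.

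The main work is to identify the geometry. Among Thurston's eight geometries, $S^2\times\Real$ is excluded by irreducibility and $\mathrm{Sol}$ by atoroidality, while for the Seifert-fibred geometries $\mathbb{H}^2\times\Real$, $\widetilde{SL_2}$ and $\mathrm{Nil}$ atoroidality forces the base to be a triangle orbifold $S^2(p,q,r)$, so the singular locus consists of vertical exceptional fibres and $M\setminus L$ inherits a Seifert fibration, contradicting its hyperbolicity. The geometry is therefore $\mathbb{H}^3$, $\mathbb{E}^3$, or $S^3$, and it remains only to exclude the last two.

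For this I would appeal to the classification of closed orientable Euclidean and spherical $3$-orbifolds in \cite{BS87, Dun88}, together with the constraint on the cone angle. The cone angle here is $2\pi/m\leq 2\pi/3$ for $m\geq 3$, which is too small to support a spherical structure on a knot orbifold with hyperbolic complement and admits a Euclidean structure only at the borderline value $2\pi/3$, that is, $m=3$; the crystallographic restriction, bounding the cone order by $\{2,3,4,6\}$, is consistent with this. Matching against Dunbar's enumeration then isolates the figure-eight knot in $S^3$ with $m=3$ as the unique Euclidean exception. I expect this combinatorial passage through the classification—confirming both the absence of spherical examples at cone angle $\leq 2\pi/3$ and the uniqueness of the figure-eight Euclidean orbifold—to be the main obstacle; it is precisely the reason the hypothesis $m\geq 3$ (cone angle $\leq 2\pi/3$) is imposed, since at $m=2$ (cone angle $\pi$) spherical and Euclidean branched covers proliferate.
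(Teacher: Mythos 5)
This statement is quoted from Boileau--Porti (\cite{BP01}, Corollary~5); the paper gives no proof of its own beyond remarking that it follows from the orbifold theorem for $3$-orbifolds of cyclic type together with the classification of closed orientable Euclidean and spherical $3$-orbifolds in \cite{BS87, Dun88}. Your proposal is a correct expansion of exactly that argument — geometrize $\mathcal O(L,m)$, eliminate the Seifert-fibred, $S^2\times\Real$ and $\mathrm{Sol}$ geometries via hyperbolicity of $M\setminus L$, and use the Euclidean/spherical classification to isolate the figure-eight exception — so it takes essentially the same route as the source the paper relies on.
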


\section{Proofs}

Let 
$\mathcal{T}$ be the set of torus knots,
$\mathcal{H}$ be the set of hyperbolic knots,
and $\mathcal{S}$ be the set of satellite knots in $S^3$.
The trivial knot does not belong to any of these sets in this paper.
Let $M_m(K)$ denote the $m$-fold cyclic branched cover of  $S^3$ along $K$.

\begin{lemma}\label{lemma41}
Suppose $m\geq 2$. 
If $K$ is non-trivial and $K\not\in \mathcal T$
  and $M_m(K)$ is aspherical,
then the center of $\pi_1(S^4\setminus K^{m,n})$ is isomorphic to $\mathbb Z$, which is generated by $h$ in~\eqref{knotgroup}, and, in particular,
\[
  \pi_1(S^4\setminus K^{m,n})/\langle h\rangle\cong \pi_1^{orb}(\mathcal O(K,m))
\]
is an invariant of $K^{m,n}$. If $K$ is trivial then 
$\pi_1(S^4\setminus K^{m,n})\cong\mathbb Z$ and
$\pi_1(S^4\setminus K^{m,n})/\langle h\rangle=1$.
\end{lemma}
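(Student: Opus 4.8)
The plan is to analyze the center of the group $\pi_1(S^4\setminus K^{m,n})$ presented in \eqref{knotgroup} by exploiting the structure of the circle action directly, rather than working purely combinatorially with the presentation. The key geometric observation is that the element $h$ corresponds to a regular orbit of the circle action, so $h$ commutes with everything: indeed the relations $x_i h x_i^{-1} h^{-1}$ already appear in the presentation, and since the $x_i$ generate the group modulo $h$, the subgroup $\langle h\rangle$ is central in all cases. Thus the first task is to determine precisely when the center is strictly larger than $\langle h\rangle$.

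First I would treat the three cases according to the type of $K$. When $K$ is trivial, the branched twist spin $K^{m,n}$ is itself unknotted (or its complement is a solid-torus-like fibration), so one reads off directly that $\pi_1(S^4\setminus K^{m,n})\cong\mathbb Z$, whence the quotient by $\langle h\rangle$ is trivial; here one checks that $h$ itself generates this $\mathbb Z$ after abelianization forces $x_1^m=h^{-\beta}$ with $x_1$ a generator. For the torus-knot case $K\in\mathcal T$, I would use the well-known fact that the complement of a torus knot fibers over $S^1$ with Seifert-fibered structure, and that the corresponding branched twist spin group is a central extension whose center is generated by $h$ together with the Seifert-fiber class, giving $\mathbb Z\oplus\mathbb Z$. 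Concretely, one invokes that $\pi_1^{orb}(\mathcal O(K,m))$ for a torus knot is the fundamental group of a spherical or Euclidean Seifert-fibered orbifold, which has nontrivial center, and pulling this center back through the central extension by $\langle h\rangle$ yields the rank-two center.

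The central case is $K$ non-trivial and $K\not\in\mathcal T$. Here the strategy is to show the center is exactly $\langle h\rangle$, equivalently that $\pi_1^{orb}(\mathcal O(K,m))=\pi_1(S^4\setminus K^{m,n})/\langle h\rangle$ has trivial center. For this I would appeal to the geometric classification of $3$-orbifolds of cyclic type: since $K$ is neither trivial nor a torus knot, the orbifold $\mathcal O(K,m)$ is irreducible and not Seifert-fibered, so by Thurston's orbifold theorem (as recorded in Theorem~\ref{thmBP5} and the surrounding literature \cite{BP01, Tak91}) it is either hyperbolic or has a nontrivial JSJ-type decomposition with hyperbolic or non-Seifert pieces. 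In every such case the orbifold fundamental group is a nontrivial free product with amalgamation or an irreducible lattice in $\mathrm{PSL}(2,\mathbb C)$-type group, and such groups are centerless. The identification $\pi_1(S^4\setminus K^{m,n})/\langle h\rangle\cong\pi_1^{orb}(\mathcal O(K,m))$ follows from comparing the presentation \eqref{knotgroup} modulo $h$ with the orbifold presentation having underlying space $S^3$ and singular locus $K$ of order $m$.

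The main obstacle I anticipate is the torus-knot computation of the center's rank and the verification that no extra central elements appear in the generic case. Specifically, ruling out hidden central elements requires knowing that the orbifold group is genuinely non-elementary (centerless), which is where the hypothesis $K\not\in\mathcal T$ is essential: it is precisely the torus knots whose orbifolds are Seifert-fibered with center. I would therefore take care that the dichotomy ``Seifert-fibered $\Leftrightarrow$ torus knot'' is correctly invoked, since a non-Seifert but reducible or toroidal orbifold could conceivably harbor a center coming from a peripheral $\mathbb Z\oplus\mathbb Z$; one must confirm that the center of the whole group does not pick up such peripheral elements, using that the center must map into the center of each JSJ piece and these intersections are trivial for non-Seifert pieces.
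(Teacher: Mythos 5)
Your reduction of the problem to the center of the quotient $Q=\pi_1(S^4\setminus K^{m,n})/\langle h\rangle\cong\pi_1^{orb}(\mathcal O(K,m))$ is sound as far as it goes: since $h$ is central, the center of $\pi_1(S^4\setminus K^{m,n})$ contains $\langle h\rangle$ and is contained in the preimage of $Z(Q)$. The genuine gap is in the torus-knot case. ``Pulling the center of $Q$ back through the central extension'' gives at most the preimage of $Z(Q)$, and this does not yield $\mathbb Z\oplus\mathbb Z$. First, the geometry is misidentified: $\mathcal O(T_{p,q},m)$ is spherical, Euclidean/Nil, or $\widetilde{SL_2}$ according to the sign of $1/p+1/q+1/m-1$, and the last case is the generic one. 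Second, and more seriously, when $1/p+1/q+1/m>1$ the group $Q$ is finite and its center can be trivial --- for the trefoil with $m=2$ one computes $Q\cong S_3$ --- in which case the preimage of $Z(Q)$ is exactly $\langle h\rangle\cong\mathbb Z$ and your argument delivers a rank-one center, not rank two; this case therefore needs separate and careful treatment, and it is exactly where the difficulty of the statement is concentrated. Even when $Z(Q)$ is infinite cyclic, you must still show that a lift of the fiber class together with $h$ generates a rank-two subgroup, i.e.\ that the image of $\mu^p$ does not become a power of $h$ in $\pi_1(S^4\setminus K^{m,n})$; this is precisely the content of the paper's explicit word computation \eqref{center} in the presentation \eqref{toruspres}, and your proposal contains no substitute for it. So the step you yourself label the ``main obstacle'' is the one left unproved.

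For the remaining case ($K$ non-trivial, $K\notin\mathcal T$) your route genuinely differs from the paper's: you want to deduce $Z(Q)=1$ from geometrization of $\mathcal O(K,m)$, whereas the paper argues directly from the presentation \eqref{knotgroup} using Burde--Zieschang \cite{BZ66} (the knot group has non-trivial center iff $K$ is a torus knot). Your route is workable in principle but heavier, and the caveat you raise at the end is a real hole rather than a formality: a non-trivial amalgamated product can have non-trivial center (e.g.\ $\mathbb Z/4*_{\mathbb Z/2}\mathbb Z/6$ has center of order $2$), and the dichotomy ``$\mathcal O(K,m)$ Seifert-fibered $\Leftrightarrow$ $K$ a torus knot'' is false at the orbifold level --- every $2$-bridge knot with $m=2$ gives a spherical, hence Seifert-fibered, orbifold whose group is dihedral of order $2|\Delta_K(-1)|$; these happen to be centerless because $|\Delta_K(-1)|$ is odd, but that must be checked rather than excluded by the dichotomy. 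As written, neither of these points is closed, so the proposal does not yet constitute a proof.
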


\begin{proof}
If $K$ is a trivial knot then the fiber of $K^{m,n}$ is a $3$-ball for any $m,n$ from the fact that $M_m(K)$ is $S^3$ again. 
Thus $K^{m,n}$ is the trivial 2-knot and $\pi_1(S^4 \setminus K^{m,n})\cong\Integer$.


Let $K$ be a non-trivial knot and assume that $K\not\in\mathcal T$ and $M_m(K)$ is aspherical.
First we prove that the center of $\pi_1^{orb}(\mathcal O(K,m))$ is trivial.

Assume that there exists a non-trivial element $\gamma$ in the center of $\pi_1^{orb}(\mathcal O(K,m))$ of finite order.
Since $M_m(K)$ is aspherical, the universal covering space of $\mathcal O(K,m)$ is $\mathbb R^3$ by the Orbifold Theorem~\cite{BP01, BMP03}.
The action of $\gamma$ to the universal covering space $\mathbb R^3$ is periodic and hence conjugate to an element of $SO(3)$ in Diff${}^+(\mathbb R^3)$ by~\cite[Theorem 4]{MY84}. 
In particular, it has a fixed point.
The action of $\gamma$ induces a self diffeomorphism
$g_\gamma$ of $M_m(K)$ having a fixed point.
Since the action of $\gamma$ is conjugate to a non-trivial element of $SO(3)$, 
$g_\gamma$ is non-trivial in $\pi_1^{orb}(\mathcal O(K,m))/\pi_1(M_m(K)) \cong \mathbb Z/m\mathbb Z$.
However, since $\gamma$ is in the center of  $\pi_1^{orb}(\mathcal O(K,m))$, the automorphism 
of $\pi_1(M_m(K),b)$ induced by $\gamma$ is trivial, where $b$ is a fixed point of $g_\gamma$ on $M_m(K)$.
Then $g_\gamma$ should be trivial by~\cite[Theorem 1.1]{CR72}. This is a contradiction.

Assume that there exists a non-trivial element $\gamma$ in the center of $\pi_1^{orb}(\mathcal O(K,m))$ of infinite order. 
Then $\gamma^m$ is a non-trivial element in the center of $\pi_1(M_m(K))$.
Hence, by the positive answer to the Seifert fiber space conjecture~\cite{Gab92, CJ94} (cf.~\cite{Pre14}),
$M_m(K)$ is a Seifert fibered space.
By~\cite{MS86}, the action of $\mathbb Z/m\mathbb Z$ on $M_m(K)$ is conjugate to a fiber-preserving action.
Since the action of $\mathbb Z/m\mathbb Z$ on $\pi_1(M_m(K))$ sends $\gamma^m$ to itself,
the action of $\mathbb Z/m\mathbb Z$  on $M_m(K)$ preserves the orientations of the fibers.
This implies that a fiber in $M_m(K)$ intersecting the preimage $\tilde K$
of $K$ should coincide with $\tilde K$ itself
since $\tilde K$ is the fixed point set of the action of $\mathbb Z/m\mathbb Z$.
Thus, $\mathcal O(K,m)=M_m(K)/(\mathbb Z/m\mathbb Z)$ is a Seifert fibered orbifold
consisting of only circle fibers one of which is $K$.
In particular, $\mathcal O(K,m)\setminus K$ is a Seifert fibered space.
This contradicts the assumption that $K\not\in\mathcal T$.

Thus, we conclude that the center of $\pi_1^{orb}(\mathcal O(K,m))$ is trivial. 

Set $G=\pi_1(S^4\setminus K^{m,n})$ and let $Z(G)$ denote the center of $G$.
By~\eqref{knotgroup}, the subgroup of $G$ generated by $h$ is a subgroup of $Z(G)$. Each element in $Z(G)$ is written in the form $xh^\delta$, where $x$ is a product of the generators $x_1,\ldots,x_l$ in~\eqref{knotgroup} and $\delta\in\mathbb Z$. 
In particular, if $xh^\delta$ is in $Z(G)$ then $x$ is also. 

Let $y$ be a non-trivial element in $Z(G)$ that is given as a product of the generators $x_1,\ldots,x_l$ in~\eqref{knotgroup} but cannot be presented as a product of $x_1^{m},\ldots,x_l^{m}$.
Assume that $yx_i y^{-1}x_i^{-1}$ is trivial in $G$ for $i=1,\ldots,l$.
Then, there exists a sequence of Tietze transformations that changes $yx_i y^{-1}x_i^{-1}$ to $1$, possibly including the word $h$ 
in the middle.
Since $h \in Z(G)$, we obtain a sequence of Tietze transformations that changes $yx_i y^{-1}x_i^{-1}$ to $1$, without $h$,
up to adding and deleting the words $x_1^{m},\ldots,x_l^{m}$ anywhere in the middle.
This means that $yx_i y^{-1}x_i^{-1}$ is trivial in the orbifold group $\pi_1^{orb}(\mathcal O(K,m))$ for all $i=1,\ldots,l$, that is, $y$ is in the center of $\pi_1^{orb}(\mathcal O(K,m))$.
%
Since the center of $\pi_1^{orb}(\mathcal O(K,m))$ is trivial, $y$ is trivial in $\pi_1^{orb}(\mathcal O(K,m))$.
This contradicts the assumption that $y$ is non-trivial.

Now we see that any element in $Z(G)$ given as a product of the generators $x_1,\ldots,x_l$ is presented as a product of $x_1^{m},\ldots,x_l^{m}$. 
This means that $Z(G)$ is generated by $h$. Setting $h$ in the presentation~\eqref{knotgroup} to be trivial, we obtain 
$G/Z(G)\cong \langle x_1,\ldots, x_l \mid r_1,\ldots, r_l,\, x_1^{m}\rangle$. 
Thus the assertion holds.
\end{proof}

In the following, we prove three propositions concerning the distinction of two branched twist spins.

\begin{proposition}\label{prop1}
Suppose $m\geq 2$, $K_1$ is a trivial knot and $K_2$ is non-trivial. 
Then $K_1^{m,n}$ and $K_2^{m,n}$ are not equivalent.
\end{proposition}

\begin{proof}
Assume that $K_1^{m,n}$ and $K_2^{m,n}$ are equivalent. 
Then $\pi_1(S^4\setminus K_1^{m,n})$ and $\pi_1(S^4\setminus K_2^{m,n})$ are isomorphic
and hence their commutator subgroups are also isomorphic.
The fiber of $K_i^{m,n}$, for $i=1,2$, is the $m$-fold cyclic branched cover $M_m(K_i)$ of $S^3$ along $K_i$ with removing an open $3$-ball, and hence the commutator subgroup of $\pi_1(S^4\setminus K_i^{m,n})$ is isomorphic to $\pi_1(M_m(K_i))$. Thus $\pi_1(M_m(K_1))$ and $\pi_1(M_m(K_2))$ are isomorphic.
Since $M_m(K_1)$ is $S^3$, $M_m(K_2)$ is also by the positive solution to the Poincar\'{e} conjecture~\cite{Per02, Per03a, Per03b}.
However, since $M_m(K_2)$ is the $m$-fold cyclic branched cover of $S^3$ along a non-trivial knot $K_2$, it cannot be homeomorphic to $S^3$ due to the positive solution to the Smith conjecture~\cite{MB84}. 
This is a contradiction.
\end{proof}



\begin{proposition}\label{prop3}
Let $m\geq 3$, and let $K_1$ be a hyperbolic knot and $K_2$ be a prime, satellite knot.
Then $K_1^{m,n}$ and $K_2^{m,n}$ are not equivalent.
\end{proposition}

\begin{proof}
Let $T^2$ be an innermost essential torus in $S^3\setminus K_2$.
If the kernel of $T^2\to\mathcal O(K_2,m)$ has a non-trivial element, 
by the Loop Theorem for $3$-orbifolds, there exists a discal $2$-suborbifold $D$ properly embedded in the solid torus bounded by $T^2$ such that $\partial D$ is not null-homotopic in $T^2$. Since $D$ intersects $K_2$ at one point transversely, $K_2$ is a composite knot.
This contradicts the assumption that $K_2$ is prime.
Hence $T^2$ is incompressible in $\mathcal O(K_2,m)$
and 
$\pi_1^{orb}(\mathcal O(K_2,m))$ contains a subgroup $\mathbb Z\oplus \mathbb Z$.

Suppose that $m\geq 3$ and if $m=3$ then $K_1$ is not a figure-eight knot.
If $K_1^{m,n}$ and $K_2^{m,n}$ are equivalent then $\pi_1^{orb}(\mathcal O(K_1,m))$ and $\pi_1^{orb}(\mathcal O(K_2,m))$ are isomorphic by Lemma~\ref{lemma41}, and hence $\pi_1^{orb}(\mathcal O(K_1,m))$ also contains a subgroup $\mathbb Z\oplus\mathbb Z$.
However, the orbifold $\mathcal O(K_1,m)$ is hyperbolic and compact by Theorem~\ref{thmBP5}.
Hence $\pi_1^{orb}(\mathcal O(K_1,m))$ does not contain a subgroup $\mathbb Z\oplus \mathbb Z$.
This is a contradiction.

Suppose $m=3$ and $K_1$ is a figure-eight knot. The manifold $M_3(K_1)$ is the so-called Hantzsche-Wendt manifold.
It is known in~\cite{LS84} that $\pi_1(M_3(K_1))$ contains 
$\mathbb Z\oplus\mathbb Z\oplus\mathbb Z$ as a subgroup.
Since the covering map $\pi_1(M_3(K_1))\to \pi_1^{orb}(\mathcal O(K_1,3))$ is a monomorphism,
$\pi_1^{orb}(\mathcal O(K_1,3))$ contains a subgroup $\mathbb Z\oplus\mathbb Z\oplus\mathbb Z$.
If
$K_1^{m,n}$ and $K_2^{m,n}$ are equivalent then $\pi_1^{orb}(\mathcal O(K_1,3))$ and $\pi_1^{orb}(\mathcal O(K_2,3))$ are isomorphic by Lemma~\ref{lemma41}, and hence
$\pi_1^{orb}(\mathcal O(K_2,3))$ also contains a subgroup $\mathbb Z\oplus\mathbb Z\oplus\mathbb Z$.
Since $\pi_1(M_3(K_2))\to \pi_1^{orb}(\mathcal O(K_2,3))$ is a monomorphism,
$\pi_1(M_3(K_2))$ also contains a subgroup $\mathbb Z\oplus\mathbb Z\oplus\mathbb Z$.
The closed $3$-manifolds whose fundamental groups contain  a subgroup $\mathbb Z\oplus\mathbb Z\oplus\mathbb Z$ are classified in~\cite{LS84}. Especially, it is proved that if such a manifold $M$ is orientable and
irreducible then
there is a finite covering map from the $3$-torus $T^3$ to $M$. Therefore, there exists a finite covering map
$T^3\to M_3(K_2)$. Let $\phi :T^3\to \mathcal O(K_2,3)$ be the composition of the covering maps $T^3\to M_3(K_2)$ and $M_3(K_2)\to \mathcal O(K_2,3)$
and let $\tilde T^2$ be a torus in $T^3$ such that $\phi(\tilde T^2)=T^2$. 

Assume that $\tilde T^2$ has a compressing disk $D$. Since $\phi(\partial D)$ in $T^2\subset \mathcal O(K_2,3)$ is null-homotopic, the Loop Theorem for $3$-orbifolds implies that there exists a discal $2$-suborbifold in $\mathcal O(K_2,3)$ bounded by  $\phi(\partial D)$.
In particular, $T^2$ is compressible in $\mathcal O(K_2,3)$, which contradicts the assumption that $K_2$ is prime. Therefore, $\tilde T^2$ is incompressible. 

Since $\tilde T^2$ is a torus in $T^3$ obtained as a component of the preimage of $T^2$ by $\phi$, it separates $T^3$ into two pieces. However, $T^3$ does not contain such an essential torus, which is a contradiction.
\end{proof}


\begin{proposition}\label{thm4}
Suppose $m\geq 2$.
For $i=1,2$, let $K_i$ be a prime, non-torus knot in $S^3$ such that 
$\mathcal O(K_i,m)$ is sufficiently large for some $m\geq 2$.
Then,  $K_1^{m,n}$ and $K_2^{m,n}$ are equivalent if and only if $K_1$ and $K_2$ are. 
In particular, if $K_1, K_2\in\mathcal S$ are prime, then $K_1^{m,n}$ and $K_2^{m,n}$ are equivalent if and only if $K_1$ and $K_2$ are.
\end{proposition}


\begin{proof}[Proof  of Proposition~\ref{thm4}]
If $K_1^{m,n}$ and $K_2^{m,n}$ are equivalent then $\pi_1^{orb}(\mathcal O(K_1,m))$ and $\pi_1^{orb}(\mathcal O(K_2,m))$ are isomorphic by Lemma~\ref{lemma41}.
Hence, by Theorem~\ref{thmTak91}, $K_1$ and $K_2$ are equivalent.
If $K_1, K_2\in\mathcal S$ are prime, $\mathcal O(K_1,m)$ and $\mathcal O(K_2,m)$ contain incompressible tori as explained in the first paragraph in the proof of Proposition~\ref{prop3}. Hence the latter assertion holds.
\end{proof}

\begin{proof}[Proof  of Theorem~\ref{thm01}]
Due to Propositions~\ref{prop1}, \ref{prop3} and~\ref{thm4}, it is enough to show the case where $K_1, K_2\in\mathcal H$.
Suppose that $K_1, K_2\in\mathcal H$. If $K_1^{m,n}$ and $K_2^{m,n}$ are equivalent then
$\pi_1^{orb}(\mathcal O(K_1,m))$ and $\pi_1^{orb}(\mathcal O(K_2,m))$ are isomorphic by Lemma~\ref{lemma41}.
If either $m\geq 4$ or both of $K_1$ and $K_2$ are not a figure-eight knot, then, by Theorem~\ref{thmBP5},  $M_m(K_1)$ and $M_m(K_2)$ are hyperbolic and the assertion follows from the Mostow rigidity 
(cf.~\cite[Theorem 6.9]{BMP03}).
Suppose that $m=3$, $K_1$ is a figure-eight knot and $K_2$ is not.
The Hantzsche-Wendt manifold $M_3(K_1)$ admits a riemannian metric with sectional curvature being constantly zero~\cite{Wo67, LS84}. Therefore, $\pi_1(M_3(K_1))$ is not isomorphic to $\pi_1(M_3(K_2))$ since $M_3(K_2)$ is hyperbolic by Theorem~\ref{thmBP5}.
Hence $K_1^{m,n}$ and $K_2^{m,n}$ are not equivalent.
\end{proof}

\begin{remark}\label{remark1}
Composite knots and torus knots are excluded in Theorem~\ref{thm01}.
Note that the fundamental groups of the $3$-orbifolds having ramification loci along the granny knot and the square knot with order $2$ are isomorphic. 
Therefore, we cannot use the argument in this paper for the pair of these composite knots if $m=2$.
It is possible to distinguish a torus knot from some non-torus knot by their branched twist spins if $m \geq 3$, see Proposition~\ref{prop4} below.
If both of knots $K_1$ and $K_2$ are torus knots, then it is proved in~\cite{Hil23} that, for $m \geq 2$, $\pi_1(S^4\setminus K_1^{m,n})\cong \pi_1(S^4\setminus K_2^{m, n})$ implies that $K_1$ and $K_2$ are equivalent.
\end{remark}

\begin{proposition}\label{prop4}
Let $m\geq 3$, and let $K_1$ be a torus knot and $K_2$ be a non-torus knot.
Assume that if $K_2$ is prime then it is not a satellite knot.
Then $K_1^{m,n}$ and $K_2^{m,n}$ are not equivalent.
\end{proposition}

\begin{proof}
Suppose that $K_1$ is a $(p,q)$-torus knot, where $p,q\geq 2$.
By Proposition~\ref{prop1}, we can assume that $K_2$ is non-trivial.
Suppose that $K_2$ is a composite knot. Assume that $K_1^{m,n}$ and $K_2^{m,n}$ are equivalent.
For each $i=1,2$, the fiber of $K_i^{m,n}$ is obtained from $M_m(K_i)$ by removing an open $3$-ball.
Since $\pi_1(M_m(K_1))$ and $\pi_1(M_m(K_2))$ are the commutator subgroups of $\pi_1(S^4\setminus K_1^{m,n})$ and $\pi_1(S^4\setminus K_2^{m,n})$, respectively, they are isomorphic. However, $\pi_1(M_m(K_2))$ is a free product of non-trivial groups while $\pi_1(M_m(K_1))$ is not due to the positive answer to the Kneser Conjecture~\cite{Sta71, Hem76}.
 This is a contradiction.

Suppose that $K_2\in \mathcal H$. Assume that if $m=3$ then $K_2$ is not a figure-eight knot.
For each $i=1,2$, the fiber of $K_i^{m,n}$ is obtained from $M_m(K_i)$ by removing an open $3$-ball.
Since $M_m(K_1)$ is Seifert fibered and $M_m(K_2)$ is hyperbolic,
$\pi_1(M_m(K_1))$ and $\pi_1(M_m(K_2))$ are not isomorphic.
Hence $K_1^{m,n}$ and $K_2^{m,n}$ are not equivalent. 
Suppose that $m=3$ and $K_2$ is a figure-eight knot.
It is known that the Hantzsche-Wendt manifold $M_3(K_2)$ satisfies
$H_1(M_3(K_2))=(\mathbb Z/4\mathbb Z)\oplus (\mathbb Z/4\mathbb Z)$, see~\cite{LS84, Hem87}.
On the other hand, $M_3(K_1)$ is the Brieskorn manifold of the singularity of the complex polynomial map $f(x,y,z)=x^p+y^q+z^3$ at the origin. 
Using the formulas in~\cite[Corollary 6.2 and Theorem 7.2]{JN83}, we can confirm that 
$|H_1(M_3(K_1))|$ cannot be $16$.
Thus, $\pi_1(M_3(K_2))$ is not isomorphic to $\pi_1(M_3(K_1))$.
Hence $K_1^{m,n}$ and $K_2^{m,n}$ are not equivalent.
\end{proof}

\begin{proof}[Proof  of Theorem~\ref{thm02}]
Suppose that $K_1^{m,n}$ and $K_2^{m,n}$ are equivalent.
Since $K_1$ and $K_2$ are prime, non-trivial and non-torus knots and 
$M_m(K_1)$ and $M_m(K_2)$ are aspherical,
 $\pi_1^{orb}(\mathcal O(K_1,m))$ and $\pi_1^{orb}(\mathcal O(K_2,m))$ are isomorphic by Lemma~\ref{lemma41}.

For $i=1,2$, if $K_i$ is a prime, satellite knot then $\mathcal O(K_i,r)$ contains an incompressible torus as mentioned in the proof of Proposition~\ref{prop3}, and hence $\mathcal O(K_i,r)$ is sufficiently large for $r\geq 2$.
If $K_i$ is Conway reducible, then $\mathcal O(K_i,r)$ is also sufficiently large for $r\geq 2$.
Hence, since $\pi_1(\mathcal O(K_1,m))$ and $\pi_1(\mathcal O(K_2,m))$ are isomorphic,
$K_1$ and $K_2$ are equivalent by Theorem~\ref{thmTak91}.
\end{proof}

\begin{remark}\label{remark2}
Knots without essential tori and essential Conway spheres are excluded in Theorem~\ref{thm02}. It is a difficult problem to determine the class of knots determined by their double branched covers, see~\cite[Problem 3.25]{Kir97}.
There are several knots that are not determined by their double branched covers, see for instance~\cite{JP21}.
It is well-known that a mutation pair of non-equivalent Montesinos knots $K_1$ and $K_2$ has the same double branched covers. However, $\pi_1(\mathcal O(K_1,2))$ and 
 $\pi_1(\mathcal O(K_2,2))$ are different by Theorem~\ref{thmTak91}.
This means that the distinction by $3$-orbifold groups works for such a pair though that by double branched covers does not.
\end{remark}

\end{document}